\newcommand{\R}{\mathbb R}
\def \colonequals {\mathrel{\mathop:}=}
 \numberwithin{equation}{section}
 \newcommand{\Lip}{\mathrm{Lip}}
\newtheorem{thm}{Theorem}[section]
\newtheorem{defin}[thm]{Definition}
\newtheorem{lem}[thm]{Lemma}
\newtheorem{cor}[thm]{Corollary}
\newtheorem*{thm*}{Theorem}
\newtheorem*{lem*}{Lemma}
\theoremstyle{remark}
\newtheorem*{ack}{Acknowledgements}
\title{Parabolic NTA domains in $\mathbb R^2$}
\date{\today}
\author{Max Engelstein}
\thanks{Partially supported by an NSF Graduate Research Fellowship, DGE 1144082 and the University of Chicago RTG grant DMS 1246999}
\subjclass[2010]{Primary 35K05, 35R35. Secondary 31A35}
\keywords{NTA domains, parabolic PDE, caloric measure, free boundary problem}
\address{ Department of Mathematics\\MIT\\Cambridge, MA 02139}
\email{ maxe@mit.edu}
\begin{document}

\maketitle

\begin{abstract}
 We show that each connected component of the boundary of a parabolic NTA domain in $\mathbb R^2$ is given by a graph. We then apply this observation to classify blowup solutions in $\mathbb R^2$ to a free boundary problem for caloric measure first considered by Hofmann, Lewis and Nystr\"om \cite{hlncaloricmeasure}. 
\end{abstract}

\section{Introduction} 
We study parabolic non-tangentially accessible ({\it NTA}) domains in $\mathbb R^2$ (one space and one time dimension). In particular, we show that the topology of $\mathbb R^2$, combined with the anisotropic nature of parabolic NTA domains, forces such domains to either lie above the graph of one function of time or between the graphs of two functions of time (see Theorem \ref{NTAdomainisagraphdomain} below). This fact should be contrasted with the existence of elliptic (in the sense of \cite{jerisonandkenig}) NTA domains in $\mathbb R^2$ whose boundary is not locally given by the graph of a function at any point or scale (e.g. Wolff snowflakes, see \cite{wolffsnowflakes}). 

Jerison and Kenig, in \cite{jerisonandkenig}, introduced (elliptic) NTA domains, as a generalization of Lipschitz domains in which the boundary behavior of harmonic functions can be studied. In particular, harmonic measure is doubling in NTA domains (\cite{jerisonandkenig}, Lemma 4.9) and harmonic functions satisfy a boundary Harnack inequality (\cite{jerisonandkenig}, Theorem 5.1). On the other hand, NTA domains do not necessarily have finite perimeter or boundaries which are locally given by graphs. Thus NTA domains are a natural setting in which to study solutions to elliptic PDE with minimal regularity assumptions. 

The NTA condition is also natural in the study of rough sets. For example, NTA domains satisfy the ``two-disk" condition which was used by David and Jerison in \cite{davidandjerison} to construct big pieces of Lipschitz graphs inside of sets whose surface measure is Ahlfors-regular. The precise relationship between the presence of big pieces of Lipschitz graphs, uniform rectifiability and the NTA condition has been the subject of much recent work (for a small sample see \cite{BadgerLipschitzApprox}, \cite{URandHMIII}, \cite{fiveauthorschordarc} and the references therein). 

Parabolic NTA domains, introduced by Lewis and Murray in \cite{lewisandmurray}, are less understood. The boundary behavior of caloric functions has been studied in parabolic Lipschitz domains (see \cite{fabesgarofalosalsa}, \cite{BrownParabolicAreaIntegrals} and \cite{lewisandmurray}) and parabolic Reifenberg flat domains (see \cite{hlncaloricmeasure} and \cite{engelsteincaloricmeasure}) but for arbitrary parabolic NTA domains it is unknown, for example, if caloric measure is always doubling (as the domain may fail to ``separate" space in the sense of equation (1.1) in \cite{hlncaloricmeasure}). Similarly, the relationship between the topological constraint of being parabolic NTA and uniform rectifiability (in the parabolic sense) is still being investigated (for some important recent work in this direction, see \cite{nysandstrom}).  Our Theorem \ref{NTAdomainisagraphdomain} is a step towards understanding the geometry of parabolic NTA domains.

We also give an application of Theorem \ref{NTAdomainisagraphdomain} to the regularity of the free boundary in a one-phase problem for caloric measure. In particular, Theorem \ref{NTAdomainisagraphdomain}, combined with Nystr\"om's work in \cite{nystromindiana}, implies that all ``blowup" solutions to a free boundary problem for caloric measure must be half-planes (see Section \ref{secglobalfreeboundaryproblem} for details). Free boundary problems for harmonic and caloric measure are a subject of great interest, as they are connected to questions in geometric measure theory, potential theory and complex analysis  (see \cite{toroexpository}). 

 For harmonic measure, blowup solutions in $\mathbb R^2$ were proven to be half-planes by Pommerenke \cite{pommerenke}, and, in higher dimensions, ``flat" blowups were proven to be half-planes by Alt and Caffarelli \cite{altcaf} (see also \cite{kenigtorooneeverywhere}). Without the flatness assumption, there are other blowup solutions (which are analogous to non-flat minimal surfaces) in $\mathbb R^n$ with $n \geq 3$ (see, e.g., example 2.7 in \cite{altcaf}). These non-flat solutions have been studied extensively (see \cite{weissonephase}, \cite{cafjerken}, \cite{desilvajersingular}, \cite{desilvajergraphs}, \cite{jersavinstability} and \cite{JerisonKamburov}). 

The free boundary problem for the parabolic Poisson kernel was first studied by Hofmann, Lewis and Nystr\"om, \cite{hlncaloricmeasure}, and subsequently by Nystr\"om, \cite{nystromfenn}, \cite{nystromindiana}, \cite{nystromzeitschrift}, and then by the author, \cite{engelsteincaloricmeasure}. Theorem 1.10 in \cite{engelsteincaloricmeasure} states, under {\it a priori} flatness assumptions, that the only blowup solutions to the parabolic free boundary problem are half spaces. Theorem \ref{globalblowupsinr2}, below, removes the flatness assumption in $\mathbb R^2$. 

We know very little about non-flat blowup solutions to the parabolic problem and it would be interesting to find blowup solutions which are time-dependent (i.e. not an elliptic blowup solution cross $\mathbb R$).  Theorem \ref{globalblowupsinr2} shows that non-flat blowup solutions to the parabolic problem don't exist in $\R^2$ but it is still an open question as to whether they exist in $\mathbb R^3$.  Finally, we note that the study of non-flat blowup solutions is connected to questions of existence, uniqueness and regularity for a parabolic free boundary problem arising in combustion (see, e.g., \cite{cafvaz} and \cite{AnderssonWeissDomainVariations}). 

Let us outline the structure of what follows: in Section \ref{geometryofparabolicNTAdomains} we introduce the concept of a parabolic NTA domain and prove that each connected component of the boundary of a parabolic NTA domains is given by a graph (Theorem \ref{NTAdomainisagraphdomain}). 

In Section \ref{secglobalfreeboundaryproblem} we combine our work with that of Nystr\"om, \cite{nystromindiana}, to classify blowup solutions in $\R^2$ to the parabolic free boundary problem (Theorem \ref{globalblowupsinr2}). We then briefly describe how this allows us to prove regularity, without assumed flatness, in $\R^2$ for a more general parabolic free boundary problem.

\begin{ack}  We thank Professor Marianna Cs\"ornyei, who helped us understand how complicated the boundary of a domain in $\mathbb R^2$ could be. We also acknowledge the helpful comments of an anonymous referee. Finally, we thank our advisor, Professor Carlos Kenig, who introduced us to free boundary problems and whose encouragement and support made this project possible.  \end{ack}

\section{The Geometry of Parabolic NTA Domains}\label{geometryofparabolicNTAdomains}

We work with points $(x,t) \in \mathbb R^2$ under the parabolic metric $\|(x_1, t_1) - (x_2, t_2)\| \equiv |x_1 - x_2| + |t_1-t_2|^{1/2}$. Let $$C_r(x_0,t_0) \equiv \{(x,t) \in \mathbb R^2| |x-x_0|< r, |t-t_0| < r^2\}$$ be the parabolic cylinder centered at $(x_0,t_0)\in \mathbb R^2$ for scale $r > 0$. If we have a point, $A \in \mathbb R^2$, we denote its space and time coordinates by $A_x$ and $A_t$ respectively.

Given a domain, $\Omega$, we define a ``surface ball", $$\Delta_r(x_0,t_0) \equiv C_r(x_0,t_0)\cap \partial \Omega,$$ for $(x_0,t_0) \in \partial \Omega$ and $r > 0$. We also need a parabolic analogue of surface measure, which we call $\sigma$, given (in $\mathbb R^{n+1}$) by \begin{equation}\label{defofsigma} d\sigma \equiv d\mathcal H^{n-1}(x)|_{\{(x,t)\in \partial \Omega\}} dt.\end{equation} So, in $\mathbb R^2$, $d\sigma \equiv \#\{x \mid (x,t) \in \partial \Omega\}dt$.

We can now define a parabolic {\it non-tangentially accessible} (NTA) domain (c.f. Chapter 3, Section 6 in \cite{lewisandmurray}).

\begin{defin}\label{parabolicNTAdomain}[Parabolic NTA domain]
A connected open set, $\Omega \subset \mathbb R^{n+1}$, is {\bf non-tangentially accessible} (NTA) if there are constants $\lambda \geq 2, \gamma \geq \sqrt{2\lambda}+1$ for which the following hold:
\begin{enumerate}
\item $\Omega$ satisfies the foward and backwards corkscrew condition: for any $(Q,\tau) \in \partial \Omega$ and $r > 0$ there exists $A_r^{\pm}(Q,\tau) \equiv (X^{\pm}_r(Q,\tau), t_r^{\pm}(Q,\tau)) \in \Omega \cap C_r(Q,\tau)$ such that $$\lambda^{-1}r^2 \leq \min(t^+_r - \tau, \tau-t^-_r) \leq r^2,$$$$ \mathrm{dist}(A_r^{\pm}(Q,\tau), \partial \Omega) \geq \frac{r}{\lambda}.$$
\item $\overline{\Omega}^c$ satisfies the forwards and backwards corkscrew condition.
\item $\Omega$ satisfies the Harnack chain condition: if $\varepsilon > 0, (X_1, t_1), (X_2, t_2) \in \Omega$ such that $$(t_2-t_1)^{1/2} > \gamma^{-1} \|(X_1, t_1)-(X_2, t_2)\|,$$$$\mathrm{dist}((X_i,t_i), \partial \Omega) > \varepsilon,\; i = 1,2,$$ then there is a ``Harnack chain" of overlapping cylinders between $(X_1, t_1)$ and $(X_2, t_2)$. We say that $\{C_{r_j}(Y_j,s_j)\}_{j=1}^\ell$ is a Harnack chain from $(X_1, t_1)$ to $(X_2, t_2)$ if there is a constant $c(\gamma) \geq 1$ such that \begin{subequations}
\label{harnackchain}
\begin{align}
&(X_1, t_1) \in C_{r_1}(Y_1, s_1), \; (X_2,t_2) \in C_{r_\ell}(Y_\ell, s_\ell), \label{startandstop}\\
&C_{r_{j+1}}(Y_{j+1}, s_{j+1})\cap C_{r_{j}}(Y_{j}, s_{j}) \neq \emptyset,\; j= 1,2,\ldots, \ell-1,\label{overlap}\\
&c(\gamma)^{-1} \mathrm{dist}((Y_j, s_j), \partial \Omega) \leq r_j \leq c(\gamma)\mathrm{dist}((Y_j, s_j), \partial \Omega), \; j =1,2,\ldots, \ell, \label{comp}\\
& s_{j+1} - s_j \geq c(\gamma)^{-1}r_j^2,\; i = 1,2,\ldots, \ell-1, \label{spreadintime}\\
& \ell \leq c(\gamma) \log(2+\varepsilon^{-1}\|(X_1, t_1)-(X_2, t_2)\|). \label{notsolong}
    \end{align}
\end{subequations}
\end{enumerate}
\end{defin}

We first note that other definitions of parabolic NTA domains (e.g. the one in \cite{lewisandmurray}) do not include the restriction $\gamma \geq \sqrt{2\lambda} + 1$. However, this restriction is necessary if we are to guarantee that $A^{+}_r(Q,\tau)$ and $A^-_r(Q,\tau)$ are separated enough in time to satisfy condition (3) above. Furthermore, if $\Omega$ is a Reifenberg flat domain, a graph domain or a cylinder over an (elliptic) NTA domain then we can always take $\gamma \geq \sqrt{2\lambda} + 1$.  Since these particular classes of parabolic NTA domains have been the focus of most of the prior work in this area, our restriction that $\gamma \geq \sqrt{2\lambda} + 1$ comports well with the results of others.

One can verify that for any elliptic NTA domain (in the sense of \cite{jerisonandkenig}), $\Omega$, the domain $\Omega \times \R$ is a parabolic NTA domain in the sense of Definition \ref{parabolicNTAdomain}. However, the above definition also allows for variation in time; for example, if $f\in \mathrm{Lip}(1/2)$, then $\Omega = \{(x,t)\mid x > f(t)\}$ is a parabolic NTA domain with NTA constants which depend on the Lipschitz norm of $f$. 

The Harnack chain condition should be viewed as a kind of ``quantitative path connectedness." In particular, it implies that if $(X_1,t_1), (X_2, t_2) \in \Omega$ and $(t_2-t_1)^{1/2} > \gamma^{-1} \|(X_1, t_1)-(X_2, t_2)\|$, then there exists a curve, $p: [0,1] \rightarrow \mathbb R^{n+1}$ such that $p(0) = (X_1, t_1), p(1) = (X_2, t_2)$ and the time coordinate of $p$ is monotonically increasing (i.e. $\alpha > \beta\Rightarrow (p(\alpha))_t > (p(\beta))_t$).

Finally, we reiterate that the boundary of an NTA domain need not be given locally by the graph of a function, nor have locally finite surface measure. For the elliptic definition, given in \cite{jerisonandkenig}, this is true even for NTA domains which are subsets of $\mathbb R^2$ (see \cite{wolffsnowflakes}). However, the parabolic definition privileges the time direction, and this allows us to conclude much greater structure on behalf of {\it parabolic} NTA domains in $\mathbb R^2$--in particular, Theorem \ref{NTAdomainisagraphdomain} below implies that each connected component of the boundary of a parabolic NTA domain in $\mathbb R^2$ is a graph (and, consequently, the boundary has locally finite $d\sigma$ measure). 

\begin{thm}\label{NTAdomainisagraphdomain}
Let $\Omega \subset \mathbb R^2$ be a parabolic NTA domain. Either there are two functions, $f,g:\R\rightarrow \R$, such that $\Omega = \{(x,t)\mid g(t) > x > f(t)\}$ or, after a possible reflection across the time axis, there is a single function, $f:\mathbb R\rightarrow \mathbb R$, such that $\Omega = \{(x,t) \mid x > f(t)\}$. 
\end{thm}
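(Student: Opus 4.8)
The plan is to show that for every $t$ the horizontal time-slice $\Omega_t \colonequals \{x \in \R : (x,t) \in \Omega\}$ is a nonempty open interval and is not all of $\R$; granting this and setting $f(t)\colonequals\inf\Omega_t$, $g(t)\colonequals\sup\Omega_t$, one has $\Omega=\{(x,t): f(t)<x<g(t)\}$, and the remaining work is to control the endpoints.

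The heart of the matter --- and the step I expect to be the main obstacle --- is showing that each $\Omega_t$ is connected. I would argue by contradiction. If $\Omega_{t_0}$ is disconnected, then, since $\Omega$ is open, there are points $(x_1,t_0),(x_2,t_0)\in\Omega$ with $x_1<x_2$ and a point $(y,t_0)\notin\Omega$ with $y\in(x_1,x_2)$; since $\Omega$ is connected the vertical line $\{x=y\}$ must meet $\Omega$, so, reversing the time direction if necessary, there is $s>t_0$ with $(y,s)\in\Omega$. Using openness of $\Omega$ one then extracts a component $P$ of $\overline{\Omega}^{c}$ wedged between two pieces of $\Omega$ near time $t_0$, together with a boundary point $(Q^\ast,t^\ast)\in\partial\Omega$ which is time-extremal for $P$, in the sense that near $(Q^\ast,t^\ast)$ the set $P$ lies in the half-space $\{t\ge t^\ast\}$. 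The backward exterior corkscrew condition then fails at $(Q^\ast,t^\ast)$: for small $r$ it demands a point of $\overline{\Omega}^{c}$ in $C_r(Q^\ast,t^\ast)\cap\{t<t^\ast\}$ at distance $\ge r/\lambda$ from $\partial\Omega$, whereas the only exterior points in a sufficiently small $C_r(Q^\ast,t^\ast)$ belong to $P\subset\{t\ge t^\ast\}$.

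Making this precise is where the real difficulty lies, since $\partial\Omega$ and $\overline{\Omega}^{c}$ can a priori be quite irregular. One must rule out a competing exterior component sitting just below $(Q^\ast,t^\ast)$ that would supply the missing backward corkscrew --- this is where I would also invoke the \emph{forward} corkscrews and the scale-invariance of the NTA hypotheses, iterating to localize the contradiction --- and one must deal with the case in which $P$ has no finite time-infimum. Crucially, the \emph{monotone-in-time} Harnack chains are used here: these are available precisely because the hypothesis $\gamma\ge\sqrt{2\lambda}+1$ forces the forward and backward interior corkscrew points $A_r^{\pm}(Q,\tau)$ to be joinable by a Harnack chain, and this is what guarantees that the two pieces of $\Omega$ flanking $P$ must be connected \emph{around} $P$, producing the time-extremal point.

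The remaining steps are softer arguments of the same flavor. The set $\{t:\Omega_t\ne\emptyset\}$ is an interval by connectedness of $\Omega$, and if it had, say, a finite supremum $b$, then a boundary point with time arbitrarily close to $b$ together with the forward interior corkscrew condition at a sufficiently large scale would produce an interior point at time $>b$, which is absurd; hence $\Omega_t\ne\emptyset$ for all $t$. If $\Omega_{t_0}=\R$ (and $\Omega\ne\R^2$) there is a first later time $s^\ast$ at which $\partial\Omega$ reappears; since $\R\times(t_0,s^\ast)\subset\Omega$, the backward exterior corkscrew condition fails at a boundary point at time $s^\ast$, so $\Omega_t\ne\R$. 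Finally, for the dichotomy, one checks (again via a forward/backward corkscrew argument) that $\partial\Omega$ contains no horizontal ray, and combines this with the connectedness of the slices to see that the three sets $\{t:\Omega_t\text{ is bounded}\}$, $\{t:\Omega_t\text{ is a half-line unbounded to the right}\}$ and $\{t:\Omega_t\text{ is a half-line unbounded to the left}\}$ are all open; since they partition the connected space $\R$, exactly one of them is all of $\R$. In the first case $f,g$ are everywhere finite and $\Omega$ is the region between two graphs; in the other two cases $\Omega$ is a one-sided graph domain, and a reflection across the time axis $\{x=0\}$, if needed, puts it in the form $\{x>f(t)\}$.
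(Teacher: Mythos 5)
Your overall strategy --- use the monotone-in-time connectivity supplied by the Harnack chains to trap a piece of $\overline{\Omega}^{c}$, then contradict the backward exterior corkscrew condition at a time-minimal point of that piece --- is exactly the mechanism of the paper's main case. But the step you yourself flag as ``where the real difficulty lies'' is left genuinely open, and the two obstructions you name (a competing exterior component sitting just below $(Q^\ast,t^\ast)$, and the possibility that $P$ has no finite time-infimum) are precisely what must be overcome; ``invoking the forward corkscrews and iterating to localize the contradiction'' does not do it. If a second component of $\overline{\Omega}^{c}$ supplies the missing backward corkscrew point, passing to \emph{its} time-infimum merely reproduces the same dilemma, and nothing prevents infinitely many such components from accumulating, so the iteration never terminates. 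The paper's device is an explicit enclosure: it builds a simple closed curve $C\subset\Omega$ around the offending point by joining small-scale forward/backward interior corkscrew points at the two boundary points to large-scale corkscrew points $A^{\pm}_{r}$ via monotone-in-time Harnack-chain curves (the hypothesis $\gamma\ge\sqrt{2\lambda}+1$ is used exactly to check that these corkscrew points are separated enough in time for the chains to exist), plus two short monotone arcs at the small scale. The trapped set $\Gamma=\Omega^{c}\cap U$, with $U$ the bounded component of $\R^{2}\setminus C$, is then compact, its time-minimum is attained at a point of $\partial\Omega$, and any backward exterior corkscrew point there at small scale would itself lie in $\Gamma$ at an earlier time. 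This one observation disposes of both of your obstructions simultaneously; without some such enclosure your argument does not close.

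Two secondary soft spots. Your treatment of degenerate slices is also only asserted: if $\Omega_{t_0}=\R$ there need not be a ``first later time $s^\ast$ at which $\partial\Omega$ reappears'' (boundary times can accumulate at $t_0$ from above with no boundary point at the infimum), and the openness of the three sets in your final trichotomy is not evident before one knows $f$ and $g$ are at least continuous. On the other hand, your decomposition (connectedness of the time-slices of $\Omega$) is genuinely different from the paper's (each connected component of $\partial\Omega$ is a graph), and it lets you skip the paper's Case 1 entirely --- the configuration of two boundary points at equal times with $\Omega$ between them is not a contradiction for you, just the slab situation --- which is a real simplification if the main trapping step were carried out in full.
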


\begin{proof}
We will first show that each connected component of $\partial \Omega$ is given by a graph. Assume to the contrary. Then there are two points in a connected component of $\partial \Omega$ with the same time coordinate. After harmless translation and scaling we may assume that $(0,0), (1,0) \in \partial \Omega$ (and are in the same connected component of $\partial \Omega$). There are two possibilities. 

\smallskip

\noindent {\bf Case 1:} There exists a point, $(a, 0)$, with $0 < a < 1$ such that $(a,0) \in \Omega$. We say that a point $(x,t)$, with $t > 0$, is {\it forward accessible} from $(a,0)$ if there exists a curve $\gamma: [0,1] \rightarrow \mathbb R^2$ such that \begin{equation}\label{gammadef} \begin{aligned} \gamma(0) =& (a,0)\\
\gamma(1) =& (x,t)\\
\gamma(\tau) \in& \Omega,\; \forall \tau\in [0,1]\\
\gamma(\tau_1)_t >& \gamma(\tau_2)_t,\; \forall 1 \geq \tau_1 > \tau_2 \geq 0.
\end{aligned}
\end{equation} 

That is, if there is a continuous curve which moves monotonically forward in time and is contained in $\Omega$, connecting $(a,0)$ to $(x,t)$. We can similarly say that $(x,t)$, with $t < 0$ is {\it backwards accessible} from $(a,0)$ if there is a curve $\gamma$ which is continuous and contained in $\Omega$ that moves monotonically backwards in time and connects $(a,0)$ to $(x,t)$. If we don't want to specify a direction we will just say that $(x,t)$ is {\it accessible} from $(a,0)$. 

Let $\mathcal A_{(a,0)}$ denote the set of all points accessible from $(a,0)$ and let $t^+ = \sup \{t \mid (x,t) \in \mathcal A_{(a,0)}\}$ and $t^- = \inf \{t \mid (x,t) \in \mathcal A_{(a,0)}\}$. If $t^+ = +\infty$ and $t^- = -\infty$, then $(0,0)$ and $(1,0)$ are in two different connected components of $\partial \Omega$.

So we may assume that either $t^+$ or $t^-$ is finite. Without loss of generality assume that $t^+ \equiv T < \infty$. For any $\varepsilon > 0$, there exists a point, $(x_\varepsilon, T-\varepsilon) \in \Omega$, which is forward accessible from $(a,0)$. By the definition of supremum, there must be some $\tilde{T} \in (T-\varepsilon, T]$ such that $(x_\varepsilon, \tilde{T}) \in \partial \Omega$. Denote the interior forward corkscrew point at $(x_\varepsilon, \tilde{T})$ at scale $r > 0$ by $A_r^+(x_\varepsilon, \tilde{T}) \equiv  (y_r^+, t_r^+)$. That $|t_r^+ - (T-\varepsilon)|^{1/2} \geq \gamma^{-1}\|(x_\varepsilon, T-\varepsilon) - (y^+_r,t_r^+)\|$ follows from the fact that $|t_r^+ - (T-\varepsilon)|^{1/2} \geq \frac{r}{\sqrt{\lambda}}$ and $\gamma \geq \sqrt{2\lambda} + 1$. Since the two points are sufficiently separated in time, there must be a Harnack chain connecting $(x-\varepsilon, T-\varepsilon)$ and $(y_r^+, t_r^+)$. 

As mentioned above, the existence of a Harnack chain implies that there is a continuous curve $\tilde{\gamma} \subset \Omega$ which moves monotonically forward in time that connects $(x-\varepsilon, T-\varepsilon)$ and $(y_r^+, t_r^+)$. Concatenating this curve with the curve that moves monotonically forward in time and connects $(a,0)$ with $(x-\varepsilon, T-\varepsilon)$  implies that $(y_r^+, t_r^+)$ is forward accessible from $(a,0)$. As $t_r^+ > T$ for $r$ sufficiently larger than $\varepsilon$, this a contradiction and we are done. 

\begin{figure}[h]
\begin{center}\includegraphics[height=1.75in]{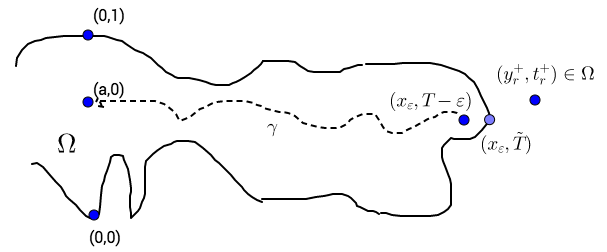}\end{center}
\caption{A contradiction occurs if $(a,0)$ cannot be connected to points with arbitrarily large time coordinates by paths increasing monotonically in time.}
\end{figure}

\medskip

\noindent {\bf Case 2:} There is a point, $(a,0) \in \Omega^c$, with $0 < a < 1$. 

Let $(X^\pm_1,t^\pm_1) \equiv (X_r^\pm(1,0), t_r^\pm(1,0))\in \Omega$ be the forward/backward interior corkscrew point at $(1,0) \in \partial \Omega$ for scale $r$ (which we will choose, large, later). Also define $(X^\pm_2, t^\pm_2) \equiv(X_{\rho}^\pm(0,0), t_{\rho}^\pm(0,0)) \in \Omega$ to be the forward/backward interior corkscrew point at $(0,0)$ for scale $\rho << a/2$ and $(X^\pm_3, t^\pm_3) \equiv (X_{\rho}^\pm(1,0), t_{\rho}^\pm(1,0)) \in \Omega$ be the forward/backward interior corkscrew point at $(1,0)$ for scale $\rho << (1-a)/2$ (see Figure \ref{figcase2}). 

We can connect $(X^+_2, t^+_2)$ to $(X^-_2, t^-_2)$ by a Harnack chain and, consequently, by a curve $\gamma_{\mathrm{bottom}}$ which stays entirely inside of $\Omega$ and is monotone in time. Furthermore, letting $\rho << 1$, we can guarantee that $\gamma_{\mathrm{bottom}}$ is short enough (using \eqref{comp} and \eqref{notsolong}) such that $a$ is greater than the space coordinate of any point on $\gamma_{\mathrm{bottom}}$. Similarly, we can construct $\gamma_{\mathrm{top}}$ between $(X^+_3, t^+_3)$ and $(X^-_3, t^-_3)$ and let $\rho$ be small enough such that $a$ is smaller than the space coordinate of any point on $\gamma_{\mathrm{top}}$. 

We claim that there exist Harnack chains connecting $(X_2^+, t^+_2)$ and $(X^+_3, t_3^+)$ to $(X^+_1, t^+_1)$. Indeed, $$\begin{aligned}(t_1^+-t_2^+)^{1/2} \geq \gamma^{-1}(|X_1^+ - X_2^+| + |t_1^+-t_2^+|^{1/2}) &\Leftrightarrow (t_1^+-t_2^+)^{1/2}(1-\gamma^{-1}) \geq \gamma^{-1}|X_1^+ - X_2^+| \\ &\Leftarrow (\frac{r^2}{\lambda} - \rho^2)^{1/2}(1-\gamma^{-1}) \geq \frac{r+\rho+1}{\gamma}\\ &\stackrel{r >> \rho}{\Leftarrow} \frac{\sqrt{3}r}{2\sqrt{\lambda}}(\gamma-1)\geq r+\rho+1\\ &\stackrel{\gamma \geq \sqrt{2\lambda}+1}{\Leftarrow} \sqrt{\frac{3}{2}}r \geq r + \rho + 1.\end{aligned}$$ Since this last equation is true if $r$ is large enough (compared to $\rho$ and $1$) we can connect $(X^+_1, t^+_1)$ and $(X_2^+, t^+_2)$ by a Harnack chain. A similar computation allows us to connect $(X^+_1, t^+_1)$ to $(X^+_3, t^+_3)$ and connect $(X^-_1, t^-_1)$ to $(X^-_2, t^-_2)$ and $(X^-_3, t^-_3)$.

So there are curves, $\gamma^\pm_2, \gamma^\pm_3$, which lie inside of $\Omega$ and have monotone increasing time coordinates which connect $(X_2^{\pm}, t^\pm_2)$ and $(X_3^{\pm}, t^\pm_3)$ respectively to $(X_1^\pm, t_1^\pm)$. The union of the six curves, $\gamma^\pm_2, \gamma^\pm_3, \gamma_{\mathrm{top}}$ and $\gamma_{\mathrm{bottom}}$, forms a simple closed curve, $C$, in $\mathbb R^2$ with the point $(a,0)$ in the interior of the bounded component of $\mathbb R^2\backslash C$. 

Since, $(a,0) \in \Omega^c$, the intersection between $\Omega^c$ and the bounded component of $\mathbb R^2\backslash C$ must be non-empty. Call this intersection, $\Gamma$.  Let $(x_0,t_0) \in \Gamma$ be a point with smallest possible time coordinate in $\Gamma$ (which exists as $\Gamma$ is closed and bounded). It is clear that $(x_0,t_0) \in \partial \Omega$ and that, for small $r$, there cannot exist a backwards in time exterior corkscrew point at $(x_0, t_0)$ for scale $r$ (as such a point would have to be contained in $\Gamma$ but have a smaller time coordinate than $(x_0,t_0)$). This is a contradiction, and so {\bf Case 2} cannot occur. 

\begin{figure}[h]\label{figcase2}
\begin{center}\includegraphics[height=1.75in]{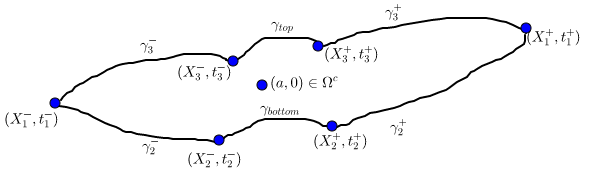}\end{center}
\caption{The 6 curves, $\gamma_2^\pm, \gamma_3^\pm, \gamma_{\mathrm{top}}$ and $\gamma_{\mathrm{bottom}}$, bound a component of $\Omega^c$ which leads to a contradiction.}
\end{figure}

\medskip

This proves that every connected component of $\partial \Omega$ is the graph of a function in time. Since $\Omega$ is connected, there are only two possibilities: either $\Omega$ is a graph domain, i.e. $\Omega = \{(x,t) \mid x > f(t)\}$, or $\Omega$ is a ``slab domain", i.e. $\Omega = \{(x,t) \mid g(t) > x > f(t)\}$. 
 \end{proof}

\subsection{Parabolic Chord Arc Domains} Dahlberg, \cite{DahlbergHM}, proved that harmonic measure and surface measure are mutually absolutely continuous in a Lipschitz domain. Mutual absolute continuity is important as it is closely related to the solvability of the Dirichlet problem for rough data.

In the parabolic setting the situation is more complicated; it was shown by Kaufman and Wu, \cite{kaufmanandwu}, that caloric measure is not necessarily mutually absolutely continuous with respect to $\sigma$, as defined in \eqref{defofsigma}, in a $\Lip(1, 1/2)$ domain. Later, Lewis and Murray, \cite{lewisandmurray}, showed that these measure are mutually absolutely continuous in a domain $\{(x_1,\ldots,x_n, t)\mid x_n \geq f(x_1,\ldots, x_{n-1},t)\}$ as long as $f\in \Lip(1,1/2)$ {\bf and} $D_t^{1/2}f \in \mathrm{BMO}(\R^n)$ (where the fractional derivative is defined in an integral sense). We call these domains {\it parabolic regular graph domains}.  If $\partial \Omega$ has two connected components both of which are graphs of functions in $\Lip(1,1/2)$ with $1/2$-time derivatives in $\mathrm{BMO}$, we will call $\Omega$ a {\it parabolic regular slab domain}. 

 Hofmann, Lewis and Nystr\"om introduced the more general {\it parabolic chord arc domains} (see \cite{hlncaloricmeasure}) and showed that, with {\it a priori} flatness assumptions, a parabolic chord arc domain contains big pieces of parabolic regular graph domains (see \cite{hlnbigpieces}). In this subsection, we prove that, in $\mathbb R^2$, a parabolic chord arc domain is in fact a parabolic regular graph domain or slab domain. 

To define parabolic chord arc domains we first recall the definition of Ahlfors regularity:

\begin{defin}\label{ahlforsregularity}
$\partial \Omega$ is {\bf Ahflors regular} if there exists an $M \geq 1$ such that for all $(Q,\tau) \in \partial \Omega$ and $R > 0$ we have $$\left(\frac{R}{2}\right)^{n+1} \leq \sigma(C_R(Q,\tau)\cap \partial \Omega) \leq MR^{n+1}.$$
\end{defin}

Following \cite{hlncaloricmeasure}, define, for $r > 0$ and $(Q,\tau) \in \partial \Omega$, \begin{equation}\label{gammainuniformrectifiability}
\gamma(Q,\tau, r) = \inf_P \left(r^{-n-3}\int_{\partial \Omega \cap C_r(Q,\tau)} d((X,t), P)^2 d\sigma(X,t)\right)
\end{equation} where the infimum is taken over all $n$-planes containing a line parallel to the $t$-axis and going through $(Q,\tau)$. This is an $L^2$ analogue of Jones' $\beta$-numbers (\cite{jonesbeta}).  We want to measure how $\gamma$, ``on average", grows in $r$ and, to that end, introduce \begin{equation}\label{whatisnu} d\nu(Q,\tau, r) = \gamma(Q,\tau, r)d\sigma(Q,\tau)r^{-1}dr.\end{equation} Recall that $\mu$ is a Carleson measure with norm $\|\mu\|_+$ if \begin{equation}\label{carlesonmeasure}
\sup_{R > 0} \sup_{(Q,\tau) \in \partial \Omega} \mu((C_R(Q,\tau) \cap \partial \Omega) \times [0,R]) \leq \|\mu\|_+ R^{n+1}.
\end{equation}

In analogy to David and Semmes \cite{DavidandSemmes} (who defined uniformly rectifiable domains in the isotropic setting) we define a parabolic uniformly rectifiable domain;

\begin{defin}\label{uniformlyrectifiable}
If $\Omega \subset \mathbb R^{n+1}$ is such that $\partial \Omega$ is Ahlfors regular and $\nu$ is a Carleson measure then we say that $\Omega$ is a {\bf (parabolic) uniformly rectifiable domain.}

If $\Omega$ is a parabolic uniformly rectifiable domain which is also parabolic NTA we say that $\Omega$ is a {\bf parabolic chord arc domain}. 

Finally, if $\nu$ satisfies a vanishing Carleson condition, that is if for any $K \subset \subset \mathbb R^{n+1}$ we have $$\lim_{\rho\downarrow 0} \sup_{(Q,\tau) \in K \cap \partial \Omega} \rho^{-(n+1)}\nu(\Delta_\rho(Q,\tau) \times [0,\rho]) = 0,$$ then we call $\Omega$ a {\bf parabolic vanishing chord arc domain}. \end{defin}

\begin{cor}\label{AhlforsRegularParabolicUR}
 Assume there is a function $f: \mathbb R \rightarrow \mathbb R$ such that  $\Omega = \{(x,t) \mid x > f(t)\}$. If $\sigma$ satisfies the lower Ahlfors regular condition, then $|f(t) - f(s)| \leq 7|t-s|^{1/2}$. If $\nu$ (as defined in \eqref{whatisnu}) is a Carleson measure, then $\|D^{1/2}_t f\|_{\mathrm{BMO}} < \infty$. 
 
 Similarly, if $\Omega = \{(x,t)\mid g(t) > x > f(t)\}$ and $\sigma$ is Ahlfors regular than both $g,f \in \Lip(1/2)$ with norm bounded by 14. Additionally, if $\nu$ is a Carleson measure than both $\|D^{1/2}_t f\|_{\mathrm{BMO}}, \|D^{1/2}_t g\|_{\mathrm{BMO}} < \infty$. 
 
 That is to say, a parabolic chord arc domain in $\mathbb R^2$ is actually a parabolic regular graph or slab domain. 
\end{cor}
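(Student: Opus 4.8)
The plan is to combine Theorem~\ref{NTAdomainisagraphdomain} with two analytic facts: the lower Ahlfors bound, which yields the $\Lip(1/2)$ estimate, and a parabolic Dorronsoro-type characterization of $\mathrm{BMO}$, which yields the half-derivative estimate. First I would fix notation. By Theorem~\ref{NTAdomainisagraphdomain} we may assume $\partial\Omega$ is the graph of a single $f$, or else the union of the graphs of two functions $f<g$; since $\Omega$ is open and lies above the graph of $f$ (between the graphs of $f$ and $g$), each of $f$ and $g$ is both upper and lower semicontinuous and hence continuous. Therefore $d\sigma=\#\{x:(x,t)\in\partial\Omega\}\,dt$ equals $dt$ in the graph case and $2\,dt$ in the slab case, and for $(Q,\tau)\in\partial\Omega$ and $R>0$ one has $\sigma(C_R(Q,\tau)\cap\partial\Omega)=|\{t:|t-\tau|<R^{2},\ |f(t)-f(\tau)|<R\}|$, together with the analogous term for $g$ in the slab case.

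For the $\Lip(1/2)$ bound I would argue by contradiction, using only the lower Ahlfors inequality $\sigma(C_R(Q,\tau)\cap\partial\Omega)\ge (R/2)^{2}$. Suppose $|f(s)-f(\tau)|>7|s-\tau|^{1/2}$ for some $\tau<s$. Reflecting in the time variable if necessary (this preserves $\sigma$) and then replacing $[\tau,s]$ by the subinterval on which $f$ runs from its minimum to its maximum, we may assume the increment is a rise with $f(\tau)=\min_{[\tau,s]}f$, $f(s)=\max_{[\tau,s]}f$, and $h:=f(s)-f(\tau)>7\ell$ where $\ell:=(s-\tau)^{1/2}$. The point is that a graph which climbs by $h$ over a time interval of length $\ell^{2}\ll h^{2}$ must, at ``most'' of the heights $v$ it crosses, leave the horizontal strip $\{|x-v|<R\}$ in time much less than $R^{2}$; so a parabolic cylinder of an appropriate scale $R$ comparable to $\ell$, centred at a well-chosen boundary point $(v,t_v)$ with $f(t_v)=v$, will satisfy $\sigma(C_R(v,t_v)\cap\partial\Omega)<(R/2)^{2}$, contradicting lower Ahlfors regularity. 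To make this precise I would average the quantity $|\{t:|t-t_v|<R^{2},\ |f(t)-v|<R\}|$ over the heights $v$ attained by $f$ on $[\tau,s]$, using the extremality of $\tau$ and $s$ to bound the part of the integral coming from times outside $[\tau,s]$; for a suitable fixed multiple $R=c\ell$ the average is strictly less than $(R/2)^{2}$, so some admissible $(v,t_v)$ witnesses the failure of the lower Ahlfors inequality, and optimizing $c$ yields the constant $7$. In the slab case $d\sigma=2\,dt$, so in the extreme case half the surface measure inside the cylinder may come from the second graph; this halves the effective lower bound and produces the constant $14$.

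For the half-derivative bound I would use the $\Lip(1/2)$ estimate just obtained. Since $n=1$, the only admissible plane $P$ in the definition of $\gamma(Q,\tau,\cdot)$ is the vertical line $\{x=Q_x\}$, so for the graph domain $\gamma(f(\tau),\tau,r)=r^{-4}\int_{\{t:|t-\tau|<r^{2},\,|f(t)-f(\tau)|<r\}}|f(t)-f(\tau)|^{2}\,dt$; because $f\in\Lip(1/2)$ with norm $\le7$, the cylinder constraint $|f(t)-f(\tau)|<r$ differs from the plain constraint $|t-\tau|<r^{2}$ only by a fixed parabolic dilation in $t$, so $\int_0^{R}\gamma(f(\tau),\tau,r)\,\tfrac{dr}{r}$ is comparable to $\int_0^{R}r^{-5}\big(\int_{|t-\tau|<r^{2}}|f(t)-f(\tau)|^{2}\,dt\big)\,dr$. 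Thus the hypothesis that $\nu$ is a Carleson measure is exactly the assertion that this last quantity satisfies a Carleson condition, which by the parabolic analogue of Dorronsoro's theorem (see \cite{hlncaloricmeasure}; compare \cite{lewisandmurray}) is equivalent to $\|D^{1/2}_t f\|_{\mathrm{BMO}}<\infty$. In the slab case, at a point of the graph of $f$ the part $\gamma_f(f(\tau),\tau,r)$ of $\gamma(f(\tau),\tau,r)$ coming from the graph of $f$ alone satisfies $\gamma_f\le\gamma$, so the Carleson condition on $\nu$ forces the corresponding condition for $\gamma_f$, giving $\|D^{1/2}_t f\|_{\mathrm{BMO}}<\infty$, and symmetrically for $g$. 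Putting these together, under the chord arc hypotheses $\sigma$ is in particular lower Ahlfors regular and $\nu$ is Carleson, so $f$ (and $g$) is $\Lip(1/2)$ with half-time-derivative in $\mathrm{BMO}$, i.e.\ $\Omega$ is a parabolic regular graph domain (respectively slab domain), which is the final assertion. The main obstacle is the $\Lip(1/2)$ step and, within it, the extraction of the sharp constant: this requires both the reduction to an extremal, ``steep'' configuration and a careful bound on the surface measure contributed by times lying just outside the short interval on which the oscillation occurs; the $\mathrm{BMO}$ step, once $\Lip(1/2)$ is in hand, is essentially a reference to the parabolic Dorronsoro characterization.
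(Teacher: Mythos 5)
Your reduction to the graph/slab case via Theorem \ref{NTAdomainisagraphdomain}, your identification of $d\sigma$ with $dt$ (resp.\ $2\,dt$), and your treatment of the BMO half are all consistent with the paper: for $n=1$ the only admissible plane in \eqref{gammainuniformrectifiability} is the vertical line through $(Q,\tau)$, the $\Lip(1/2)$ bound makes the cylinder truncation harmless, and the equivalence with $\|D_t^{1/2}f\|_{\mathrm{BMO}}<\infty$ is exactly the harmonic-analysis argument the paper defers to the end of Section 2 of \cite{hlncaloricmeasure}. Your observation that in the slab case the contribution of the graph of $f$ alone is dominated by $\gamma$, so the Carleson condition passes to each component separately, is a correct way to handle the two graphs.

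The $\Lip(1/2)$ step is where you diverge from the paper, and where your argument has a real gap. You propose to average $\sigma(C_R(v,t_v)\cap\partial\Omega)$ over the heights $v$ crossed during the rise and exhibit one cylinder violating the lower Ahlfors bound. The obstruction is the contribution from times $t\notin[\tau,s]$ with $|t-t_v|<R^2$: extremality of $\tau$ and $s$ constrains $f$ only \emph{on} $[\tau,s]$ and says nothing about where $f$ goes just outside it, so it cannot be used to discard that contribution. Bounding it honestly by Fubini, the average over $v\in[f(\tau),f(s)]$ is at most $2R(\ell^2+2R^2)/h$, and requiring this to be below $(R/2)^2$ forces $h>8\ell^2/R+16R\geq 16\sqrt{2}\,\ell$; no choice of $R=c\ell$ works when $h=7\ell$, so ``optimizing $c$'' cannot produce the stated constant $7$ (it produces roughly $23$). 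The paper argues in the opposite direction, which sidesteps this entirely: if $f(1)-f(0)\geq 7$, continuity gives times $t_i\in[0,1]$ with $f(t_i)=i$ for $i=1,\dots,7$; each unit cylinder $C_1(f(t_i),t_i)$ must carry at least the lower Ahlfors amount of surface measure, while the sum of these measures is controlled by a fixed multiple of the length of the time window $[-1,2]$, which is exactly $3$ because $\sigma$ on a graph is $dt$ no matter how wild $f$ is outside $[0,1]$. Summing lower bounds against a trivially bounded total is robust precisely where your single-cylinder average is not. Your approach can be repaired to give $f\in\Lip(1/2)$ with a universal (larger) constant, which suffices for the final ``parabolic regular graph/slab domain'' assertion, but as written it neither controls the outside times nor delivers the constants $7$ and $14$ in the statement.
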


\begin{proof}
We prove the theorem when $\Omega$ is a graph domain. The ``slab" case follows similarly.

Assume that $\sigma$ is Ahlfors regular, and, arguing to obtain a contradiction, assume that there are two times $t,s$ such that $|f(t) - f(s)| \geq 7|t-s|^{1/2}$. Without loss of generality, let $t = 0$ and $s = 1$ (as all the relevant conditions are scale and translation invariant).

 $f(t)$ is continuous, so there are points $t_i \in [0,1]$ such that $f(t_i) = i$ for $i = 1,2,...,7$. Therefore, $C_1(f(t_i), t_i) \cap C_1(f(t_j), t_j) = \emptyset$ if $i \neq j$. By lower Ahlfors regularity, $\sigma(C_1(f(t_i), t_i)) > 1/2$. On the other hand, $\sum_{i=1}^7 \sigma(C_1(f(t_i), t_i)) \leq 3$, as all the points in $C_1(f(t_i), t_i)$ have $t$ values between $-1$ and $2$ and there is no overlap between the cylinders. Summing up, we get $7/2 < 3$ a contradiction. 

Once we know that $f$ is Lipschitz, a harmonic analysis argument shows that if $\nu$ is a Carleson measure, then $D^{1/2}_tf\in \mathrm{BMO}(\mathbb R)$. This is proven at the end of Section 2 in \cite{hlncaloricmeasure}.
\end{proof}

\section{A Free Boundary Problem for the Parabolic Poisson Kernel}\label{secglobalfreeboundaryproblem}

Our main application of Theorem \ref{NTAdomainisagraphdomain} is a classification of blow-up solutions to a free boundary problem for the parabolic Poisson kernel in $\mathbb R^2$. To introduce this problem, let us recall the concept of the caloric Green function and caloric measure with a pole at infinity. 

If $\Omega$ is a parabolic NTA domain, we can define $\omega$, the caloric measure with a pole at infinity, and $u \in C(\Omega)$, the associated Green function, which satisfy  \begin{equation}\label{ipgreenfunction}\tag{IP} \left\{ \begin{aligned}
u(Y,s) \geq& 0, \;\forall (Y,s) \in \Omega,\\
u(Y,s) \equiv& 0,\; \forall (Y,s) \in \partial \Omega,\\
-(\partial_s + \Delta_Y)u(Y,s) =& 0, \; \forall (Y,s) \in \Omega\\
\int_{\partial \Omega} \varphi d\omega =& \int_\Omega u(Y,s) (\Delta_Y - \partial_s)\varphi dYds, \; \forall \varphi \in C^\infty_c(\mathbb R^{n+1}).
\end{aligned} \right. \end{equation} (For the existence, uniqueness and some properties of this measure/function, see Appendix C in \cite{engelsteincaloricmeasure}). There are also analogous objects for the adjoint equation. Note that we can define caloric measure and the caloric Green function with a ``finite pole" but that working with these is more complicated due to the anisotropic nature of the heat equation. To not get bogged down in technical details, we will work only with poles at infinity but all the following results hold for finite poles with just minor modifications. 

Let us now recall some salient concepts of ``regularity" for  $\omega$.

\begin{defin}\label{doubling}
We say $\omega$ is a {\bf doubling measure} if there exists a $c > 0$ such that $\omega(\Delta_{2r}(Q,\tau)) \leq c\omega(\Delta_r(Q,\tau))$ for all $r > 0$ and $(Q,\tau) \in \partial \Omega$. 
\end{defin}

\begin{defin}\label{apweight}
We say $\omega \in A_\infty(d\sigma)$ ({\bf is an $A_\infty$-weight}) if $\omega << \sigma$ on $\partial \Omega, h(Q,\tau) \colonequals \frac{d\omega}{d\sigma}$, and there exists a $c > 0$ such that \begin{equation}\label{reverseholderinequality2} \fint_{\Delta_{2r}(Q,\tau)} h(Q,\tau)^p d\sigma(Q,\tau) \leq c \left(\fint_{\Delta_r(Q,\tau)} h(Q,\tau)d\sigma(Q,\tau)\right)^p.\end{equation}
\end{defin}

Closely related to being an $A_\infty$-weight are the $\mathrm{BMO}$ and $\mathrm{VMO}$ function classes. 

\begin{defin}\label{bmo}
We say that $f\in \mathrm{BMO}(\partial \Omega)$ with norm $\|f\|_*$ if $$\sup_{r > 0} \sup_{(Q,\tau) \in \partial \Omega} \fint_{\Delta_r(Q,\tau)} |f(P,\eta) - f_{\Delta_r(Q,\tau)}| d\sigma(P,\eta)\leq \|f\|_*,$$ where $f_{\Delta_r(Q,\tau)} \equiv \fint_{\Delta_r(Q,\tau)} f(P,\eta) d\sigma(P,\eta)$, the average value of $f$ on $\Delta_r(Q,\tau)$. 

Define $\mathrm{VMO}(\partial \Omega)$ to be the closure of uniformly continuous functions vanishing at infinity in $\mathrm{BMO}(\partial \Omega)$. 
\end{defin}

We are broadly interested in the question of what the regularity of $\omega$ tells us about the regularity of $\Omega$. In the simplest case, we ask that $\omega \equiv \sigma$. We call these ``blowup" solutions because they appear as the limit of properly rescaled and translated domains which satisfy a more general free boundary problem (see Lemma \ref{pseudoblowupsareblowupsolutions} below). Theorem 1.10 in \cite{engelsteincaloricmeasure} states that a blowup solution, in any dimension, under an additional flatness assumption, must be  a half-space. The following result removes this flatness assumption in $\R^2$. 

\begin{thm}\label{globalblowupsinr2}
Let $\Omega \subset \mathbb R^2$ be a parabolic chord arc domain such that $\partial \Omega$ is connected and the caloric measure at infinity, $\omega$, satisfies $\omega \equiv \sigma$ (i.e. the parabolic Poisson kernel, $h \equiv \frac{d\omega}{d\sigma}$, satisfies $h \equiv 1$). Then, after a possible translation and reflection, $\Omega = \{(x,t)\mid x > 0\}$ and $u(x,t) = x$. 
\end{thm}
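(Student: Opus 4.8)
The plan is to combine the structural dichotomy of Theorem~\ref{NTAdomainisagraphdomain} with Corollary~\ref{AhlforsRegularParabolicUR} to reduce to a graph (or slab) with $f \in \Lip(1/2)$ and $D^{1/2}_t f \in \mathrm{BMO}$, and then invoke Nystr\"om's uniqueness theorem from \cite{nystromindiana}. First I would observe that since $\partial \Omega$ is assumed connected, Theorem~\ref{NTAdomainisagraphdomain} forces $\Omega$ to be a graph domain $\Omega = \{(x,t) \mid x > f(t)\}$ (after a possible reflection): a slab domain has two disjoint boundary components, contradicting connectedness. Next, since $\Omega$ is a parabolic chord arc domain, $\sigma$ is Ahlfors regular and $\nu$ is a Carleson measure, so Corollary~\ref{AhlforsRegularParabolicUR} applies and gives $f \in \Lip(1/2)$ with $\|D^{1/2}_t f\|_{\mathrm{BMO}} < \infty$; that is, $\Omega$ is a parabolic regular graph domain.

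With that regularity in hand, the hypothesis $\omega \equiv \sigma$ means the parabolic Poisson kernel $h = d\omega/d\sigma$ is identically $1$, which in particular lies in $\mathrm{VMO}(\partial\Omega)$ (it is constant, hence trivially of vanishing mean oscillation). The heart of the argument is then to quote the main result of \cite{nystromindiana}: for a parabolic regular graph domain whose parabolic Poisson kernel is in $\mathrm{VMO}$ (and, a fortiori, constant), the unit normal to the boundary lies in $\mathrm{VMO}$, and the Carleson measure $\nu$ satisfies the vanishing trace condition, so $\Omega$ is in fact a parabolic vanishing chord arc domain with $\mathrm{VMO}$ normal. Combined with Theorem~1.10 of \cite{engelsteincaloricmeasure} (which handles the flat case) or directly with the rigidity part of Nystr\"om's work, the only such global domain with $\omega$ equal to $\sigma$ is a half-space. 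I would then pin down the constant: once $\Omega = \{x > 0\}$, the function $u(x,t) = x$ solves the heat equation, vanishes on the boundary, is nonnegative, and one checks via the defining identity in \eqref{ipgreenfunction} that its associated caloric measure at infinity is exactly $dt = d\sigma$, so the normalization $\omega \equiv \sigma$ forces $u(x,t) = x$ (any other nonnegative caloric function vanishing on $\{x=0\}$ and growing at most linearly is a multiple of $x$ by a parabolic Phragm\'en--Lindel\"of / boundary Harnack argument, and the multiple is fixed by $\omega \equiv \sigma$).

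The step I expect to be the main obstacle is verifying that the hypotheses of Nystr\"om's theorem in \cite{nystromindiana} are met in precisely the form stated there, and extracting from it the global rigidity rather than merely local regularity of the free boundary. Nystr\"om's results are typically phrased for bounded domains or for domains with a priori flatness, and transferring them to the ``blowup'' setting of a global parabolic regular graph domain with pole at infinity requires care: one needs that the $\mathrm{VMO}$ (indeed constant) Poisson kernel propagates to a $\mathrm{VMO}$ normal at every scale including large scales, and then that a global parabolic regular graph domain with $\mathrm{VMO}$ normal whose oscillation vanishes uniformly must have an affine defining function, hence $f \equiv \mathrm{const}$. This last implication is the genuinely global input; it is where the $\R^2$ topology (via Theorem~\ref{NTAdomainisagraphdomain}) has already done its work by ruling out more exotic connected boundaries, so what remains is the ``flat blowups are half-planes'' mechanism, which in the parabolic graph setting follows from \cite{engelsteincaloricmeasure} together with the regularity upgrade. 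I would organize the write-up so that the only substantive citation is to the classification of blowups under flatness plus the $\mathrm{VMO}$-normal conclusion of \cite{nystromindiana}, with everything else reduced to Theorem~\ref{NTAdomainisagraphdomain} and Corollary~\ref{AhlforsRegularParabolicUR}.
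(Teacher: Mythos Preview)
Your reduction to a parabolic regular graph domain via Theorem~\ref{NTAdomainisagraphdomain} and Corollary~\ref{AhlforsRegularParabolicUR} is exactly what the paper does. Where you diverge is in the endgame: you pass through $\log h \in \mathrm{VMO}$ to obtain a $\mathrm{VMO}$ normal and vanishing chord arc structure, then feed that flatness into Theorem~1.10 of \cite{engelsteincaloricmeasure}. The paper instead cites a single rigidity statement, Theorem~1.5 of \cite{nystromindiana}, which says directly that a parabolic regular graph domain in $\R^2$ with $\omega = \sigma$ is a half-plane and $u(x,t)=x$; no detour through $\mathrm{VMO}$ normals or Theorem~1.10 is needed. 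Your route is not wrong, but it is longer and, as you yourself flag, the passage from ``$\mathrm{VMO}$ normal at small scales'' to ``global flatness in the sense of Theorem~1.10'' needs justification (scale invariance of $h\equiv 1$ does supply it, but it is extra work).

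The concern you raise about whether Nystr\"om's hypotheses are met---specifically whether a smallness or a~priori flatness assumption is required---is genuine, and the paper handles it explicitly: Theorem~1.5 in \cite{nystromindiana} is stated for graph domains with small constant, but the paper notes (citing the end of \cite{nystromzeitschrift}) that the smallness is not actually necessary. That remark is the clean resolution of the obstacle you anticipated, and once you have it, the direct citation replaces your entire second and third paragraphs.
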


\begin{proof} Theorem 1.5 in \cite{nystromindiana} states that if $\omega = \sigma$ in a parabolic regular graph domain in $\R^2$, then that domain is a half-space and $u(x,t) = x$. Corollary \ref{AhlforsRegularParabolicUR} implies that a parabolic chord arc domain in $\R^2$ with connected boundary must be a parabolic regular graph domain, and thus the result follows. We should note that Theorem 1.5 in \cite{nystromindiana} is stated for parabolic regular graph domains with ``small constant" but that the assumption of smallness is not actually necessary (see the end of \cite{nystromzeitschrift} for more details). 
\end{proof}

\subsection{Blowups of Parabolic Chord Arc Domains in $\mathbb R^2$} Free boundary problems for the Poisson kernel have been studied in arbitrary dimensions under various additional flatness or smallness assumptions in \cite{hlncaloricmeasure}, \cite{nystromfenn}, \cite{nystromindiana}, \cite{nystromzeitschrift} and \cite{engelsteincaloricmeasure}. We will outline here how our classification of blowup solutions (and Theorem \ref{NTAdomainisagraphdomain}) renders these assumptions unnecessary in $\R^2$. 

\begin{thm}\label{vmotheorem}[Compare with Theorem 1.9 in \cite{engelsteincaloricmeasure}] Let $\Omega \subset \R^{2}$ be parabolic chord arc domain with $\log(h) \in \mathrm{VMO}(\partial \Omega)$, then $\Omega$ is a parabolic vanishing chord arc domain. 
\end{thm}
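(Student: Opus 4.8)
The plan is to leverage the structural result of Theorem \ref{NTAdomainisagraphdomain}, which (since $\partial\Omega$ is connected) tells us that $\Omega$ is a parabolic regular graph domain, $\Omega = \{(x,t)\mid x > f(t)\}$ with $f\in\Lip(1/2)$ and $D^{1/2}_t f \in \mathrm{BMO}(\R)$. The quantity $\gamma(Q,\tau,r)$ in \eqref{gammainuniformrectifiability} then measures (in an $L^2$ sense) how far the graph of $f$ deviates, inside $C_r(Q,\tau)$, from the best approximating plane through $(Q,\tau)$ parallel to the $t$-axis; over a graph domain this is essentially an $L^2$ oscillation of $f$ minus its best affine (in fact best constant, since the plane contains the $t$-direction) approximation at scale $r$, and controlling the Carleson measure $d\nu$ is comparable to controlling a Littlewood--Paley / square-function expression built from $f$. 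So the statement to prove becomes: if $\log h \in \mathrm{VMO}(\partial\Omega)$ then the Carleson measure $\nu$ has vanishing trace, i.e. $\rho^{-(n+1)}\nu(\Delta_\rho(Q,\tau)\times[0,\rho]) \to 0$ uniformly on compacta.

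The key steps, in order, are as follows. First, reduce to the graph case via Theorem \ref{NTAdomainisagraphdomain} and Corollary \ref{AhlforsRegularParabolicUR}, so that all analysis happens for the function $f$ on $\R$; record that $\Omega$ being a parabolic chord arc domain already gives $\nu$ is a Carleson measure (a quantitative, non-vanishing bound). Second, invoke the finite-pole-at-infinity free boundary machinery: the hypothesis $\log h \in \mathrm{VMO}$, together with the compactness properties of parabolic chord arc domains established in \cite{hlncaloricmeasure} and used in \cite{engelsteincaloricmeasure} (this is exactly the content surrounding Theorem 1.9 there), lets one run a blowup argument. Third, argue by contradiction: if the vanishing Carleson condition fails, there is a sequence of scales $\rho_j\downarrow 0$ and centers $(Q_j,\tau_j)$ in a fixed compact set with $\rho_j^{-(n+1)}\nu(\Delta_{\rho_j}(Q_j,\tau_j)\times[0,\rho_j]) \geq \delta > 0$. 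Rescale $\Omega$ at $(Q_j,\tau_j)$ by $\rho_j$ to get domains $\Omega_j$; by the chord arc bounds these converge (in the appropriate Hausdorff-type sense, passing also to the limit of the Green functions and caloric measures) to a limiting parabolic chord arc domain $\Omega_\infty$, and the $\mathrm{VMO}$ hypothesis forces $\log h_\infty \equiv \mathrm{const}$ on $\partial\Omega_\infty$, i.e. after normalization $h_\infty \equiv 1$, so $\Omega_\infty$ is a blowup solution in the sense of Theorem \ref{globalblowupsinr2}. Fourth, apply Theorem \ref{globalblowupsinr2}: $\Omega_\infty$ is a half-space, hence $\partial\Omega_\infty$ is a plane and $\gamma(\cdot,\cdot,\cdot)\equiv 0$ on it, so $\nu_\infty \equiv 0$; but the Carleson mass is lower semicontinuous (or: the relevant $\beta$-type functionals pass to the limit) in a way that contradicts the uniform lower bound $\delta$ surviving the blowup. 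This yields the vanishing Carleson condition and hence that $\Omega$ is a parabolic vanishing chord arc domain.

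The main obstacle, and the place where care is genuinely needed, is the third and fourth steps: making the blowup compactness rigorous and, crucially, showing that the Carleson functional $\rho^{-(n+1)}\nu(\Delta_\rho\times[0,\rho])$ behaves well under the rescaling-and-limiting procedure. One has to know that $\gamma(Q,\tau,r)$ is (upper semi-)continuous with respect to Hausdorff convergence of the boundaries together with weak-$\ast$ convergence of the surface measures $\sigma_j$, and that the $\mathrm{VMO}$ condition on $\log h$ is preserved/improved in the limit (this is the standard ``$\mathrm{VMO}$ implies constant in the blowup'' phenomenon, but it must be combined with the convergence of the Poisson kernels $h_j = d\omega_j/d\sigma_j$, which is where the parabolic chord arc structure and the results of \cite{hlncaloricmeasure}, \cite{engelsteincaloricmeasure} do the real work). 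In $\R^2$ the graph description makes the $\beta$-number bookkeeping concrete — $\gamma(Q,\tau,r)$ is just an $L^2$ mean oscillation of $f$ — which is what lets the argument close cleanly without the a priori flatness assumptions present in the higher-dimensional literature. Everything else (scale/translation invariance of the hypotheses, the Ahlfors and Harnack chain bounds passing to the limit, extraction of convergent subsequences) is routine given the tools already cited.
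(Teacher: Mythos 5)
Your proposal follows essentially the same route as the paper: pseudo-blowups of $\Omega$ converge to parabolic regular graph domains (Lemma \ref{blowupsarechordarcdomains}), the $\mathrm{VMO}$ hypothesis forces $\omega_\infty=\sigma_\infty$ in the limit (Lemma \ref{pseudoblowupsareblowupsolutions}), Theorem \ref{globalblowupsinr2} then identifies every pseudo-blowup as a half-plane, and the vanishing Carleson condition follows from the weak convergence $\sigma_i\rightharpoonup\sigma_\infty$ (the paper cites Proposition 5.1 of \cite{engelsteincaloricmeasure} for this last step, which you phrase as semicontinuity of the Carleson functional under blowup). One small correction: the theorem does not assume $\partial\Omega$ is connected, so $\Omega$ may be a slab domain rather than a single graph domain; this is harmless because the blowup limits are still single-graph domains (the blowup centers may be taken in one boundary component), which is exactly how the paper arranges to apply Theorem \ref{globalblowupsinr2}.
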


Given Theorem \ref{vmotheorem}, arguing as in Sections 6 and 7 of \cite{engelsteincaloricmeasure} shows that H\"older regularity on the part of the Poisson kernel implies H\"older regularity on the part of the boundary. 

\begin{thm}\label{holdertheorem}[Compare with Theorem 1.11 in \cite{engelsteincaloricmeasure}] Let $\Omega \subset \R^2$ be a parabolic chord arc domain with $\log(h) \in \mathbb C^{k+\alpha, (k+\alpha)/2}(\R^2)$ for $k \geq 0$ and $\alpha \in (0,1)$, then each connected component of $\partial \Omega$ is the graph of a $C^{(k+1+\alpha)/2}$ function in time. 

Furthermore, if $\log(h)$ is analytic in $X$ and in the second Gevrey class in $t$ then, each connected component of $\partial \Omega$ is graph of a function in the second Gevrey class in $t$. Similarly, if $\log(h) \in C^\infty$, then each component of $\partial \Omega$ is given by the graph of a $C^\infty$ function in time.
\end{thm}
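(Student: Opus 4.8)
The plan is to reduce the statement, via Corollary~\ref{AhlforsRegularParabolicUR} and Theorem~\ref{vmotheorem}, to a situation already handled in \cite{engelsteincaloricmeasure}: there the analogue of this theorem (Theorem~1.11 of \cite{engelsteincaloricmeasure}) is proven for parabolic chord arc domains satisfying an \emph{a priori} Reifenberg flatness hypothesis, and in $\R^2$ that hypothesis turns out to be automatic because of Theorem~\ref{NTAdomainisagraphdomain}.

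First I would pass to a parabolic regular graph domain and then to a \emph{vanishing} one. Since $\Omega$ is a parabolic chord arc domain, $\partial\Omega$ is Ahlfors regular and $\nu$ is a Carleson measure, so Corollary~\ref{AhlforsRegularParabolicUR} shows that each connected component of $\partial\Omega$ is the graph $x=f(t)$ of a function with $\|f\|_{\Lip(1/2)}\le 14$ and $\|D^{1/2}_t f\|_{\mathrm{BMO}}<\infty$; hence $\Omega$ is a parabolic regular graph domain or slab domain, and in the slab case I would simply treat the two boundary components one at a time. Next, since $\log h\in\mathbb C^{k+\alpha,(k+\alpha)/2}(\R^2)$ is locally uniformly H\"older continuous it lies (locally) in $\mathrm{VMO}(\partial\Omega)$, so Theorem~\ref{vmotheorem} upgrades $\Omega$ to a parabolic \emph{vanishing} chord arc domain: $\gamma(Q,\tau,r)\to 0$ locally uniformly as $r\downarrow 0$, equivalently $D^{1/2}_t f$ has vanishing mean oscillation on compact sets. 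This is exactly the structural input that the iteration of Sections~6 and 7 of \cite{engelsteincaloricmeasure} uses in place of flatness.

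With that in hand I would run the bootstrap of \cite{engelsteincaloricmeasure}, Sections~6--7, essentially verbatim. Schematically: the free boundary identity on $\partial\Omega$ (the Poisson kernel $h$ is the normalized normal derivative of $u$, up to a Jacobian factor), together with the boundary comparison/Harnack principle for the relevant heat operator in the graph domain $\Omega$ and interior/boundary parabolic Schauder estimates, transfers the modulus of continuity of $\log h$ to $D^{1/2}_t f$ one parabolic half-derivative at a time: knowing $f\in\mathbb C^{(j+1+\alpha)/2}$ makes $\Omega$ a graph domain of that class, Schauder then gives $u\in\mathbb C^{j+1+\alpha,(j+1+\alpha)/2}$ up to $\partial\Omega$, and the free boundary condition feeds this back to gain the next half-derivative on $f$. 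After $k$ steps $f\in\mathbb C^{(k+1+\alpha)/2}$ in time, and iterating over all $k$ gives the $\mathbb C^\infty$ statement. For $\log h$ analytic in $X$ and in the second Gevrey class in $t$, I would replace the Schauder estimates at each stage by the analytic--Gevrey regularity estimates for the heat operator used in the corresponding part of \cite{engelsteincaloricmeasure}, which propagate that class from the boundary data through $u$ and, via the free boundary condition, onto $f$.

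The hard part is not the PDE in the previous paragraph — that iteration is already carried out in \cite{engelsteincaloricmeasure} — but verifying that all of its quantitative ingredients (the boundary Harnack inequality, the Schauder and Gevrey estimates, and the termination/summability of the iteration) depend only on the NTA constants of $\Omega$ together with $\|f\|_{\Lip(1/2)}$ and $\|D^{1/2}_t f\|_{\mathrm{BMO}}$, all of which Corollary~\ref{AhlforsRegularParabolicUR} keeps under control. Making this dependence explicit is what legitimizes dropping the flatness assumption, and is the only genuinely technical point.
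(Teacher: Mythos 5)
Your proposal follows the paper's own route exactly: the paper likewise disposes of this theorem in one line, by noting that Theorem \ref{vmotheorem} (itself resting on Corollary \ref{AhlforsRegularParabolicUR} and Theorem \ref{NTAdomainisagraphdomain}) supplies the vanishing chord arc / graph structure that replaces the \emph{a priori} flatness hypothesis, after which the bootstrap of Sections 6 and 7 of \cite{engelsteincaloricmeasure} applies verbatim. Your additional remarks on the Schauder/Gevrey iteration and on tracking the constants are a faithful (and somewhat more explicit) account of what that citation contains.
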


(See \cite{engelsteincaloricmeasure} for more discussion on these theorems and the precise definition of the Gevrey class and the parabolic H\"older spaces). 

To prove Theorem \ref{vmotheorem} we must define pseudo-blowups (first introduced by Kenig and Toro, \cite{kenigtoroannsci}, to study the analogous elliptic problem). 

\begin{defin}\label{parabolicpseudoblowup}
Let $K$ be a compact set, $(Q_i,\tau_i) \in K \cap \partial \Omega$ and $r_i \downarrow 0$. Then we define 

\begin{subequations}
\label{eq:blowups}
\begin{align}
\Omega_i \colonequals& \{(x,t)\mid (r_ix + Q_i, r_i^2t + \tau_i)\in \Omega\} \\
u_i(x,t) \colonequals& \frac{u(r_ix + Q_i, r_i^2t+\tau_i)}{r_i \fint_{\Delta_{r_i}(Q_i,\tau_i)} h d\sigma} \label{ublowup}\\
\omega_i(E) \colonequals& \frac{\sigma(\Delta_{r_i}(Q_i,\tau_i))}{r_i^{n+1}} \frac{\omega(\{(P,\eta)\in \Omega\mid ((P-Q_i)/r_i, (\eta-\tau_i)/r_i^2)\in E\})}{\omega(\Delta_{r_i}(Q_i,\tau_i))}\label{omegablowup}\\
\sigma_i \colonequals& \sigma|_{\partial \Omega_i} \label{sigmablowup}
    \end{align}
\end{subequations}
\end{defin}

The following Lemma tells us that the pseudo-blowups of parabolic chord arc domains are parabolic chord arc domains in $\R^2$. 

\begin{lem}\label{blowupsarechordarcdomains}
Let $\Omega \subset \mathbb R^2$ be a parabolic chord arc domain, $K$ be compact, $(Q_i,\tau_i) \in K\cap \partial \Omega$ and $r_i\downarrow 0$. Then (possibly passing to a subsequence) there is some $\Omega_\infty \subset \mathbb R^2$ such that $\Omega_i \rightarrow \Omega_\infty$ in the Hausdorff distance sense. Furthermore, $\Omega_\infty$ is a parabolic regular graph domain. Finally, $\omega_i \rightharpoonup \omega_\infty$ and $u_i \rightarrow u_\infty$ (uniformly on compact sets), where $\omega_\infty$ and $u_\infty$ are the caloric measure at infinity and Green function at infinity for $\partial \Omega_\infty$. 
\end{lem}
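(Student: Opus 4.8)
The plan is a blow-up compactness argument in four steps: first, check that the rescaled domains $\Omega_i$ are parabolic chord arc domains with the same structural constants as $\Omega$; second, extract a locally Hausdorff limit $\Omega_\infty$ and identify it as a parabolic regular graph domain via Theorem \ref{NTAdomainisagraphdomain} and Corollary \ref{AhlforsRegularParabolicUR}; third, show the normalized Green functions $u_i$ converge; fourth, pass to the limit in the integral identity in \eqref{ipgreenfunction} to identify $\omega_\infty$ and $u_\infty$. For the first step I would note that the parabolic NTA constants $\lambda,\gamma$, the Ahlfors regularity constant $M$, and the Carleson norm $\|\nu\|_+$ are all invariant under the parabolic translation–dilation $(x,t)\mapsto(r_ix+Q_i,r_i^2t+\tau_i)$ — indeed $\gamma(Q,\tau,r)$ in \eqref{gammainuniformrectifiability} is itself scale invariant, and $\nu$ is built in \eqref{whatisnu} so that \eqref{carlesonmeasure} is a scale-invariant condition — so each $\Omega_i$ is a parabolic chord arc domain with the constants of $\Omega$ and, in addition, $(0,0)\in\partial\Omega_i$. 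By Theorem \ref{NTAdomainisagraphdomain}, $\Omega$ — hence each $\Omega_i$ — is a graph domain or a slab domain, and Corollary \ref{AhlforsRegularParabolicUR} shows the defining function(s) $f_i$ (and $g_i$) are in $\Lip(1/2)$ with norm at most $14$ and have $\|D_t^{1/2}f_i\|_{\mathrm{BMO}}$ (and $\|D_t^{1/2}g_i\|_{\mathrm{BMO}}$) bounded uniformly in $i$, since these are translates and parabolic dilates of the fixed $f,g$ and both norms are scale invariant. After translating we may take $f_i(0)=0$, and if $(Q_i,\tau_i)$ lies on an upper graph we first reflect $x\mapsto -x$.

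For the second step, Arzel\`a--Ascoli and the uniform $\Lip(1/2)$ bound give a subsequence with $f_i\to f_\infty$ locally uniformly, $f_\infty\in\Lip(1/2)$, $f_\infty(0)=0$; in the slab case $g_i(0)=(g(\tau_i)-f(\tau_i))/r_i\to\infty$, because the two connected components of $\partial\Omega$ are disjoint closed sets and $K$ is compact, so the upper graph escapes every bounded set and the limit is again a graph domain. I would set $\Omega_\infty=\{(x,t)\mid x>f_\infty(t)\}$; uniform convergence $f_i\to f_\infty$ gives $\Omega_i\to\Omega_\infty$ and $\partial\Omega_i\to\partial\Omega_\infty$ in the local Hausdorff sense. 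Since the BMO norms of $D_t^{1/2}f_i$ are uniformly bounded and $f_i\to f_\infty$ in $L^1_{\mathrm{loc}}$, we get $D_t^{1/2}f_i\to D_t^{1/2}f_\infty$ as distributions, so $\|D_t^{1/2}f_\infty\|_{\mathrm{BMO}}<\infty$ by the (weak-$\ast$) lower semicontinuity of the BMO norm. Thus $\Omega_\infty$ is a parabolic regular graph domain, hence in particular a parabolic chord arc domain (\cite{lewisandmurray}, \cite{hlncaloricmeasure}).

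For the third and fourth steps I would first observe that the normalizations in Definition \ref{parabolicpseudoblowup} are chosen exactly so that $(\Omega_i,u_i,\omega_i)$ again satisfies \eqref{ipgreenfunction}: a change of variables in the last line of \eqref{ipgreenfunction} for $(\Omega,u,\omega)$, together with the identity $\omega_i(\Delta_1(0,0))=\sigma(\Delta_{r_i}(Q_i,\tau_i))/r_i^{n+1}$ read off from \eqref{omegablowup}, yields $\int\varphi\,d\omega_i=\int_{\Omega_i}u_i(\Delta_x-\partial_t)\varphi\,dx\,dt$ for all $\varphi\in C_c^\infty(\R^2)$. Using uniform Ahlfors regularity to compare $\fint_{\Delta_{r_i}}h\,d\sigma$ with the value of $u$ at a suitable boundary corkscrew point — via the comparison/CFMS-type estimate that this value is $\approx r^{-n}\omega(\Delta_r(Q,\tau))$ in parabolic chord arc domains — one sees $u_i$ is bounded above and below at the scale-$1$ corkscrew point of $\Omega_i$ at $(0,0)$; interior parabolic estimates, the Harnack chain condition of Definition \ref{parabolicNTAdomain}(3), and boundary H\"older continuity of caloric functions vanishing on the boundary of an NTA domain then give that $\{u_i\}$ is locally uniformly bounded and equicontinuous, so along a subsequence $u_i\to u_\infty$ locally uniformly, $u_\infty\ge 0$, caloric in $\Omega_\infty$, vanishing on $\partial\Omega_\infty$, and $u_\infty\not\equiv 0$. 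Since also $\omega_i(\Delta_1(0,0))\in[(1/2)^{n+1},M]$ and caloric measure is uniformly doubling in parabolic chord arc domains, $\omega_i(\Delta_R(0,0))\le C(R)$ for every $R$, so a further subsequence satisfies $\omega_i\rightharpoonup\omega_\infty'$ for a Radon measure supported on $\partial\Omega_\infty$. Letting $i\to\infty$ in $\int\varphi\,d\omega_i=\int_{\Omega_i}u_i(\Delta_x-\partial_t)\varphi\,dx\,dt$ — the right side converges because $u_i\to u_\infty$ uniformly on $\supp\varphi$ and $\mathbf{1}_{\Omega_i}\to\mathbf{1}_{\Omega_\infty}$ almost everywhere — shows $(\Omega_\infty,u_\infty,\omega_\infty')$ solves \eqref{ipgreenfunction}, and uniqueness of the caloric measure and Green function at infinity (Appendix C of \cite{engelsteincaloricmeasure}) forces $\omega_\infty'=\omega_\infty$ and identifies $u_\infty$ with the Green function at infinity of $\Omega_\infty$.

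The main obstacle is marshalling the uniform, scale-invariant boundary estimates for caloric measure and the Green function at infinity in parabolic chord arc domains — doubling, the comparison/CFMS-type estimate, and boundary H\"older continuity — which are precisely what keep $u_i$ and $\omega_i$ from degenerating in the limit; these are available in \cite{lewisandmurray} and \cite{hlncaloricmeasure} but must be quoted carefully. By contrast, the step that is genuinely hard in general dimension, namely propagating the uniform rectifiability (Carleson) condition to $\Omega_\infty$, is painless here: Theorem \ref{NTAdomainisagraphdomain} and Corollary \ref{AhlforsRegularParabolicUR} turn it into a uniform BMO bound on $D_t^{1/2}f_i$, which $f_\infty$ inherits automatically.
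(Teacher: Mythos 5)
Your argument follows the same route as the paper's: establish the graph structure of $\partial\Omega_i$ via Theorem \ref{NTAdomainisagraphdomain}, use the scale-invariant $\Lip(1/2)$ and BMO bounds of Corollary \ref{AhlforsRegularParabolicUR} to extract $f_\infty$ by Arzel\`a--Ascoli, and then invoke the standard Lewis--Murray boundary estimates on $\Lip(1/2)$ graph domains to get compactness of $\{u_i\}$ and $\{\omega_i\}$ and pass to the limit in \eqref{ipgreenfunction}. The paper's proof is essentially a sketch deferring to these same references; your write-up supplies the details (including the observation that in the slab case the second boundary component escapes to infinity, which the paper leaves implicit) but is not a genuinely different proof.
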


\begin{proof}
By passing to a subsequence, we can assume that $(Q_i, \tau_i)$ are all contained in one connected component of $\partial \Omega$. By Theorem \ref{NTAdomainisagraphdomain}, this component is given by the graph of some function $f$. Then $\partial \Omega_\infty$ is the graph of $f_\infty$ which is the uniform limit of the functions $f_i$, where $f_i(t) =\frac{f(r_i^2t + \tau_i) -f(\tau_i)}{r_i}$. Note that the $f_i$ are pre-compact in the $\Lip(1/2)$ space and that the claimed regularity of $f_\infty$ then follows from standard arguments.  

That $\{u_i\}$ and $\{\omega_i\}$ are pre-compact follows from standard estimates on $\Lip(1/2)$ graph domains (see, e.g., \cite{lewisandmurray}, Chapter 3, Section 6). Of course, $\Omega$ need not be a graph domain, but for any compact set $K$, $\partial \Omega_i\cap K$ will be given by a single graph if $i$ is large enough. It is then easy to see that the relevant estimates (i.e. the boundary Harnack inequality) hold in this case. For more details see Section 4 in \cite{engelsteincaloricmeasure} or Lemmas 16 and 17 in \cite{nystromfenn}.

Finally, that $u_\infty$ and $\omega_\infty$ are the Green function and caloric measure follows immediately from the fact that $u_i \rightarrow u_\infty$ and $\omega_i \rightharpoonup \omega_\infty$. 
\end{proof}

When $\log(h) \in \mathrm{VMO}$, we can use a harmonic analysis argument to show that each pseudo-blowup must be a ``blowup" solution (i.e. that $\omega_\infty = \sigma_\infty$). 

\begin{lem}\label{pseudoblowupsareblowupsolutions}
Let $\Omega, h$ satisfy the hypothesis of Theorem \ref{vmotheorem}. Let $K$ be compact and fix two sequences, $(Q_i,\tau_i) \in K\cap \partial \Omega$ and $r_i\downarrow 0$.  Let $\Omega_\infty, \omega_\infty$ and $u_\infty$ be the parabolic chord arc domain, caloric measure at infinity and Green function at infinity given by Lemma \ref{blowupsarechordarcdomains}. Then $\omega_\infty = \sigma_\infty$ ($\sigma_\infty$ is the parabolic surface measure supported on $\partial \Omega_\infty$). 
\end{lem}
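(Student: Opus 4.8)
The plan is to reduce the statement to a known result about blowups of the caloric free boundary problem by passing the $\mathrm{VMO}$ hypothesis on $\log(h)$ through the pseudo-blowup procedure. First I would recall that, by Lemma \ref{blowupsarechordarcdomains}, the rescaled domains $\Omega_i$ converge to a parabolic regular graph domain $\Omega_\infty$, with $\omega_i \rightharpoonup \omega_\infty$ and $u_i \to u_\infty$ the caloric measure and Green function at infinity for $\Omega_\infty$. The Poisson kernel (Radon--Nikodym derivative) of $\omega_i$ with respect to $\sigma_i$ is, by the normalization in \eqref{ublowup}--\eqref{omegablowup}, a rescaled and renormalized copy of $h$: explicitly $h_i(P,\eta) = h(r_iP + Q_i, r_i^2\eta + \tau_i)/\fint_{\Delta_{r_i}(Q_i,\tau_i)} h\, d\sigma$. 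The goal is to show $h_i \to 1$ in a strong enough sense that $\omega_\infty = \sigma_\infty$.

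The key analytic input is the $\mathrm{VMO}$ condition. The plan is to use the standard fact (the John--Nirenberg / Garnett--Jones characterization, used identically in the elliptic setting in \cite{kenigtoroannsci} and in the parabolic setting in \cite{engelsteincaloricmeasure}) that $\log(h) \in \mathrm{VMO}(\partial\Omega)$ implies that, as $r_i \downarrow 0$ uniformly for centers in the compact set $K$, the rescaled functions $\log(h_i)$ have mean oscillation tending to zero on every fixed compact set, hence $\log(h_i) \to c$ for some constant $c$ in $\mathrm{BMO}_{\mathrm{loc}}$, and in fact (after the normalization by the average of $h$) one gets $\fint \log h_i \to 0$ so $c = 0$. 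Combining vanishing mean oscillation with the normalization $\fint_{\Delta_1} h_i \, d\sigma_i \to 1$ forces $h_i \to 1$ in $L^p_{\mathrm{loc}}(d\sigma_i)$ for every $p < \infty$ (this is where one invokes that $A_\infty$ weights with small $\mathrm{BMO}$ norm of their log are close to $1$ in every $L^p$). Since $d\omega_i = h_i\, d\sigma_i$ and $\sigma_i \rightharpoonup \sigma_\infty$ (Ahlfors regularity of $\partial\Omega_i$ with uniform constants, plus Hausdorff convergence of the graphs), one passes to the limit to conclude $d\omega_\infty = 1 \cdot d\sigma_\infty = d\sigma_\infty$.

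There is one point requiring care in the order of quantifiers: the $\mathrm{VMO}$ decay must be uniform over the centers $(Q_i,\tau_i)$, which is legitimate precisely because they lie in the fixed compact set $K$ and $\mathrm{VMO}$ functions have mean oscillation decaying uniformly on compacta. I would make this explicit by fixing, for each $R$, the estimate $\sup_{(Q,\tau)\in K\cap\partial\Omega}\fint_{\Delta_{Rr_i}(Q,\tau)}|\log h - (\log h)_{\Delta_{Rr_i}}|\,d\sigma \to 0$ as $i \to \infty$, and then translating this into a statement about $\log h_i$ on $\Delta_R(0,0) \subset \partial\Omega_i$ via the change of variables, using also that the Ahlfors regularity constants of $\partial\Omega_i$ are uniformly controlled so that averages with respect to $\sigma_i$ and rescaled averages with respect to $\sigma$ are comparable.

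\textbf{Main obstacle.} The substantive step is the harmonic-analysis implication ``$\log h \in \mathrm{VMO}$, together with the scaling normalization, forces the rescaled Poisson kernels $h_i$ to converge to the constant $1$ in $L^p_{\mathrm{loc}}$''; everything else (Hausdorff convergence of domains, weak convergence of measures, identification of $u_\infty,\omega_\infty$) is supplied by Lemma \ref{blowupsarechordarcdomains} or by routine compactness. I expect to handle this step by citing the corresponding argument in \cite{engelsteincaloricmeasure} (and its elliptic ancestor \cite{kenigtoroannsci}), since the only new feature here — that $\partial\Omega$ need not be a single graph — is irrelevant at small scales, as $\partial\Omega_i \cap K$ is a single graph for $i$ large by Theorem \ref{NTAdomainisagraphdomain}, exactly as in the proof of Lemma \ref{blowupsarechordarcdomains}.
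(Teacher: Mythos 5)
Your proposal is correct and follows essentially the same route as the paper: the paper likewise defers the key harmonic-analysis step (that the $\mathrm{VMO}$ condition on $\log h$ forces the rescaled kernels to trivialize, so that $\sigma_i \rightharpoonup \omega_\infty$) to Lemma 3.3 of \cite{nystromindiana} and Lemma 4.5/Proposition 4.7 of \cite{engelsteincaloricmeasure}, and then identifies $\lim_i \sigma_i = \sigma_\infty$ exactly as you do, by noting that $d\sigma_i = dt$ on the graphs $\partial\Omega_i \cap \supp\varphi$ for $i$ large and invoking uniform convergence of the graph functions.
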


\begin{proof}
That $\sigma_i \rightharpoonup \omega_\infty$ follows from a harmonic analysis argument using the condition, $\log(h) \in \mathrm{VMO}(\partial \Omega)$. This argument is given in detail in the proof of Lemma 3.3 in \cite{nystromindiana}, and in Lemma 4.5 and Proposition 4.7 in \cite{engelsteincaloricmeasure}.

 In $\R^2$ we have, $$\lim_{i\rightarrow \infty} \int_{\partial \Omega_i} \varphi d\sigma_i = \lim_{i \rightarrow \infty} \int_{\partial \Omega_i} \varphi dt = \int_{\partial \Omega_\infty} \varphi dt = \int_{\partial \Omega_\infty} \varphi d\sigma_\infty,\; \forall \varphi \in C_c^\infty(\R^2).$$  The first equality above is due to the fact that, for large enough $i$, $\mathrm{spt}\; \varphi\cap \partial \Omega_i$ is a graph (on which parabolic surface measure is simply $dt$). 

Since $\sigma_i \rightharpoonup \omega_\infty$ and $\sigma_i \rightharpoonup \sigma_\infty$ we must conclude that $\sigma_\infty = \omega_\infty$. 
\end{proof}

Lemma \ref{pseudoblowupsareblowupsolutions} combined with Theorem \ref{globalblowupsinr2} implies that if $\Omega, \log(h)$ are as in Theorem \ref{vmotheorem}, then every pseudo-blowup of $\Omega$ is a half-plane. From here the conclusion that $\Omega$ is a parabolic vanishing chord arc domain follows easily from the dominated convergence theorem (for more details see \cite{nystromzeitschrift}). Actually, a more general fact is true. Let $\Omega$ be a parabolic chord arc domain in any dimension with the property that for any pseudo-blowup $\Omega_i \rightarrow \Omega_\infty$, $\Omega_\infty$ is a half-space and $\sigma_i\rightharpoonup \sigma_\infty$. Then $\Omega$ is a parabolic vanishing chord arc domain (this is Proposition 5.1 in \cite{engelsteincaloricmeasure}). Theorem \ref{vmotheorem} follows.

\bibliography{ParabolicBiblio}{}
\bibliographystyle{amsalpha}

\end{document}